\numberwithin{equation}{section}  
\newcommand{\beq}{\begin{equation}} 
\newcommand{\eeq}{\end{equation}} 
\newcommand{\bea}{\begin{aligned}}
\newcommand{\eea}{\end{aligned}}
\newcommand{\bdm}{\begin{displaymath}}
\newcommand{\edm}{\end{displaymath}}
\newcommand{\barr}{\begin{array}}
\newcommand{\earr}{\end{array}}
\newcommand{\ben}{\begin{enumerate}}
\newcommand{\een}{\end{enumerate}}
\newcommand{\bde}{\begin{description}}
\newcommand{\ede}{\end{description}}
\newtheorem{teor}{Theorem}[section]
\newtheorem{prop}[teor]{Proposition} 
\newtheorem{lem}[teor]{Lemma}
\newcommand{\PP}{\mathbb{P}}
\newcommand{\E}{{\mathbb{E}}}
\newcommand{\defi}{\equiv} 
\newcommand{\s}{\sigma}
\newcommand{\vare}{\varepsilon}
\newcommand{\1}{\mathbbm{1}}
\newenvironment{dedication}
        {\vspace{6ex}\begin{quotation}\begin{center}\begin{em}}
        {\par\end{em}\end{center}\end{quotation}}
\begin{document}

\begin{dedication}
\hfill
\vspace*{1cm}{To Anton Bovier, mentor and friend, on the occasion of his $60^{th}$ birthday.}
\end{dedication}
\title[McKean, Bovier and Hartung]
{On McKean's martingale in the \\ Bovier-Hartung extremal process} 

\author[C. Glenz]{Constantin Glenz}            
 \address{Constantin Glenz \\ J.W. Goethe-Universit\"at Frankfurt, Germany.}
 \email{constantin.glenz@gmail.com}

\author[N. Kistler]{Nicola Kistler}
\address{Nicola Kistler \\ J.W. Goethe-Universit\"at Frankfurt, Germany.}
\email{kistler@math.uni-frankfurt.de}

\author[M. A. Schmidt]{Marius A. Schmidt}
\address{Marius A. Schmidt \\ J.W. Goethe-Universit\"at Frankfurt, Germany.}
\email{mschmidt@math.uni-frankfurt.de}

\subjclass[2000]{60J80, 60G70, 82B44} \keywords{(in)homogeneous Branching Brownian motions,  McKean martingale, Bovier-Hartung extremal process}

 \date{\today}

\begin{abstract} 
It has been proved by Bovier \& Hartung [{\it Elect. J. Probab.} {\bf 19} (2014)] that the maximum of a variable-speed branching Brownian motion (BBM) in the weak correlation regime converges to a randomly shifted Gumbel distribution. The random shift is given by the almost sure limit of McKean's martingale, and captures the early evolution of the system. In the Bovier-Hartung extremal process, McKean's martingale thus plays a role which parallels that of the derivative martingale in the classical BBM. In this note, we provide an alternative interpretation of McKean's martingale in terms of a law of large numbers for {\it high-points} of BBM, i.e. particles which lie at a macroscopic distance from the edge. At such scales, 'McKean-like martingales' are naturally expected to arise in all models belonging to the BBM-universality class. 
\end{abstract}

\maketitle


\section{Introduction} \label{introduction and main result}
Over the last years, one has witnessed an explosion of activity in the study of the extremes of Branching Brownian Motion, BBM for short. The list of papers on the subject is way too long to be given here: below, we shall only mention those works which are indispensable for the discussion, and refer the reader to Bovier's monograph \cite{bovier} for an exhaustive overview of the literature. 

The classical, supercritical and  {\it time-homogeneous} BBM is constructed as follows.  A single particle performs standard Brownian motion $x(t)$, starting at $0$ at time $0$. After an exponential random time $T$ of mean one and independent of $x$, the particle splits into two (say) particles. The positions of these particles are independent Brownian motions starting at $x(T)$.
Each of these processes have the same law as the first Brownian particle. 
Thus, after a time $t>0$, there will be $n(t)$ particles  located at $x_1(t), \dots, x_{n(t)}(t)$, with $n(t)$ being the random number of offspring generated up to that time (note that $\E n(t)=e^t$). 

A fundamental link between BBM and partial differential equations 
was observed by McKean \cite{mckean}, who showed that the law of the maximal displacement of BBM solves the celebrated KPP-equation \cite{kpp}. Thanks to the cumulative works \cite{kpp, mckean, bramson, lalley_sellke} it is now known that the maximum of BBM weakly converges, upon recentering, to a random shift of the Gumbel distribution. More precisely, let 
\beq \label{centering_kpp}
m(t) \defi \sqrt{2} t - \frac{3}{2\sqrt{2}} \log t, \qquad M(t) \defi \max_{k\leq n(t)} x_k(t)-m(t)\ ,
\eeq
and consider the so-called {\it derivative martingale} 
\beq \label{defi_martingale}
Z(t) \defi \sum_{k\leq n(t)} \left(\sqrt{2}t - x_k(t) \right) \exp -\sqrt{2}\left(\sqrt{2}t - x_k(t) \right)\ .
\eeq
Leaning on the work of McKean  \cite{mckean} and Bramson \cite{bramson}, Lalley and Sellke \cite{lalley_sellke}
proved that
\beq \label{y_to_zero}
 \lim_{t\uparrow \infty} Z(t) = Z \text{ a.s.},
\eeq
with $Z$ a positive random variable, and that 
\beq 
\lim_{t \to \infty} \PP\left[ M(t) \leq x\right] = \E \exp - C Z e^{-\sqrt{2} x} \,.
\eeq
where $C>0$ is a numerical constant. Inspecting the proof of this result, one gathers that the derivative martingale captures the early evolution of the system. Perhaps a more intuitive interpretation of the derivative martingale has been given by Arguin, Bovier and Kistler \cite{abk_ergodicity} in the form of an 
ergodic theorem, to wit: 
\beq \bea \label{abk_e}
\lim_{t\uparrow \infty} \frac{1}{t} \int_0^t \1\{M(s) \leq x \} ~ ds = \exp\left( -C Z e^{-\sqrt{2} x}\right) \  \text{ almost surely.}
\eea \eeq
The derivative martingale may thus be seen as a {\it measure of success}, capturing the fraction of particles which reach maximal heights.

Also of interest are {\it time-inhomogeneous} BBMs. These have been first introduced by Derrida and Spohn \cite{derrida_spohn}, and are constructed as follows: one considers a BBM where, at time $s$, all particles move independently as Brownian motions with time-dependent variance 
\beq
\s^2(s) = \begin{cases}
\s_1^2 & 0 \leq s < t/2 \\
\s_2^2 & t/2 \leq s \leq t.
\end{cases}
\eeq
In the above, $\s_1, \s_2$ are (positive) parameters chosen in such a way that the total variance is normalised to unity, to wit: $\s_1^2 /2 + \s_2^2/2 =1$. (One may also consider $K>2$ distinct variance-regimes, but the qualitative picture does not change much, as long as $K$ remains finite). Denoting by $\hat n(s)$ the number of particles at time $s$, and by $\{\hat x_k(s), k\leq \hat n(s)\}$ their position, it has been proved by Fang and Zeitouni \cite{wizard_of_oz} that 
\beq
\max_{k \leq \hat n(t)} \hat x_k(t) = \begin{cases}
\sqrt{2} t - \frac{1}{2\sqrt{2}} \log(t) + O_\PP(1) & \text{if}\; \s_1 < \s_2 \\
\sqrt{2}(\s_1/2 + \s_2/2) - \frac{3}{2 \sqrt{2}}(\s_1+\s_2) \log(t) + O_\PP(1) & \text{if}\; \s_1 > \s_2,
\end{cases}
\eeq
The second case above, namely $\s_1 > \s_2$,  is easily understood: the maximum of the time-inhomogeneous process is given by the superposition of the relative maxima at time $t/2$ and the maxima of their offspring after a $t/2$-lifespan. 

The first case, $\s_1 < \s_2$, is arguably more interesting as it shows that the level of the maximum coincides with that of Derrida's REM \cite{derrida}: in spite of what may look as severe correlations between the Brownian particles, the extremes behave as if they were coming from a field of independent random variables.
This, however, is only true at the above level of precision, as the weak limit of the maximum does detect the underlying correlations. To formulate precisely, let us shorten 
\beq
m_{\text{REM}}(t) \defi \sqrt{2} t - \frac{1}{2\sqrt{2}} \log(t), \quad \hat M(t) \defi \max_{k\leq \hat n(t)} \hat x_k(t)- m_{\text{REM}}(t).
\eeq
Bovier and Hartung \cite{bovier_hartung} prove that in such a regime of "REM-collapse" ($\s_1 < \s_2$), 
\beq \label{bh}
\lim_{t\to \infty} \PP\left[ \hat M(t) \leq x\right] = \E \exp - \hat C \hat Z e^{-\sqrt{2} x} , 
\eeq
for a numerical constant $\hat C > 0$ and  $\hat Z$  a positive random variable. Much akin to the homogeneous BBM, the weak limit of the time-inhomogeneous process is thus given by a random shift of the Gumbel distribution. For our purposes, it is however crucial to emphasize that the random shift in case of REM-collapse is {\it not} given by the derivative martingale, but by the so-called {\it McKean's martingale}. To see how the latter comes about, recall the {\it time-homogeneous} BBM $\{x_k(t), k\leq n(t)\}$, and define  {\it McKean's martingale} by
\beq \label{mckean}
\hat Z(t) \defi \sum_{k \leq n(t)} \exp\left[- t (1+\s_1^2) + \sqrt{2} \s_1 x_k(t) \right].
\eeq
Bovier and Hartung \cite{bovier_hartung} show that this is, in fact, a square integrable martingale, {\it provided} that $\s_1 < 1$ strictly. It therefore converges almost surely to a well defined random variable whose law coincides with that of the $\hat Z$-random variable shifting the maximum \eqref{bh} of the {\it time-inhomogeneous} process\footnote{We emphasize tha the REM-collapse holds only for $\s_1 < \s_2$ ; given the normalization $\s_1^2/2+\s_2^2/2 = 1$, this is equivalent to $\s_1 < \sqrt{2}/2$. The square integrability of McKean's martingale  holds however for any $\s_1 < 1$. The  choice $\s_1 = 1$ corresponds to a boundary case where square integrability no longer holds. It has furthermore been proved by Lalley and Sellke \cite{lalley_sellke} that for $\s_1=1$ the limit of McKean's martingale  vanishes, in which case it is the derivative martingale that enters the picture for the weak limit of the maximum of (the time-homogeneous) BBM.}.

The analogy with the homogeneous BBM goes even further: an  inspection of the proof of the weak convergence \eqref{bh} shows that McKean's martingale captures, in fact, the early evolution of the system (remark, in particular, that McKean's martingale depends solely on $\s_1$). 

The purpose of these notes is to present yet another interpretation of McKean's martingale, somewhat close in spirit to the ergodic theorem \eqref{abk_e}. To formulate precisely, let 
\beq
\alpha \in (0, \sqrt{2}), \quad \Delta_\alpha \defi \sqrt{2}-\alpha, \quad \text{and} \quad 
Z_\alpha(t) \defi \sharp \left\{ k \leq n(t): x_k(t) \geq \Delta_\alpha t \right\}.
\eeq
The random variable $Z_\alpha(t)$ thus counts the "$\alpha$-high-points", those particles which lag behind the leader at time $t$ by a macroscopic distance $\alpha t$. Finally, consider the McKean's martingale 
\beq
Y_\alpha(t) \defi \sum_{k\leq n(t)} \exp\left[ - t \left( 1+  \frac{1}{2} \Delta_\alpha^2 \right)+ \Delta_\alpha x_k(t) \right]\,.
\eeq
Through the  matching  $\alpha \defi \sqrt{2}(1-\s_1)$, and by the aforementioned result of Bovier and Hartung we see that this is,  for $\alpha > 0$, a square integrable martingale whose limit
\beq \label{conv_bh}
\lim_{t \to \infty} Y_\alpha(t) =: Y_\alpha
\eeq
exists almost surely. Here is our main result.
\begin{teor} \label{main} (Strong law of large numbers) For any $0 < \alpha < \sqrt{2}$, and $Y_\alpha$ as in \eqref{conv_bh}, 
\beq
\lim_{t\to \infty} \frac{Z_\alpha(t)}{\E Z_{\alpha}(t)} = Y_\alpha\,, \quad \text{almost surely}.
\eeq
\end{teor}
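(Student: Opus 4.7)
The plan is to use the branching decomposition at an intermediate time $s$, take $t\to\infty$ for fixed $s$, and afterwards let $s\to\infty$. A many-to-one computation together with the Gaussian tail $\PP(B_t\ge \Delta_\alpha t)\sim(\Delta_\alpha\sqrt{2\pi t})^{-1}e^{-\Delta_\alpha^2 t/2}$ yields the first-moment asymptotic
\[
\E Z_\alpha(t) \sim \frac{1}{\Delta_\alpha\sqrt{2\pi t}}\,e^{t(1-\Delta_\alpha^2/2)},\qquad t\to\infty.
\]

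Fix $s>0$ and, by the branching property, write
\[
Z_\alpha(t) = \sum_{i\le n(s)} Z_\alpha^{(i)}(s,t),
\]
where $Z_\alpha^{(i)}(s,t)$ counts the time-$t$ descendants of the $i$-th time-$s$ particle lying above $\Delta_\alpha t$. Conditional on $\F_s$ the summands are independent, with
\[
\E[Z_\alpha^{(i)}(s,t)\mid\F_s] = e^{t-s}\,\PP(B_{t-s}\ge\Delta_\alpha t - x_i(s)).
\]
Taylor-expanding the Gaussian exponent $(\Delta_\alpha t - x_i(s))^2/(2(t-s))$ in $s/t$, one obtains for $t\to\infty$ with $s$ fixed
\[
\frac{\E[Z_\alpha^{(i)}(s,t)\mid\F_s]}{\E Z_\alpha(t)}\longrightarrow \exp\!\Bigl[-s\bigl(1+\tfrac12\Delta_\alpha^2\bigr)+\Delta_\alpha x_i(s)\Bigr],
\]
which is exactly the $i$-th summand of $Y_\alpha(s)$. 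Summing over $i\le n(s)$,
\[
\lim_{t\to\infty}\frac{\E[Z_\alpha(t)\mid\F_s]}{\E Z_\alpha(t)} = Y_\alpha(s)\quad\text{a.s.,}
\]
and \eqref{conv_bh} then delivers $Y_\alpha(s)\to Y_\alpha$ as $s\to\infty$.

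To transfer the convergence from the conditional mean to $Z_\alpha(t)$ itself, I estimate the conditional variance. By conditional independence,
\[
\mathrm{Var}\bigl(Z_\alpha(t)\mid\F_s\bigr)=\sum_{i\le n(s)}\mathrm{Var}\bigl(Z_\alpha^{(i)}(s,t)\mid\F_s\bigr),
\]
and a many-to-two computation inside each subtree bounds the summands by a constant times $(\E[Z_\alpha^{(i)}(s,t)\mid\F_s])^2$. After normalising by $(\E Z_\alpha(t))^2$, the dominant contribution reduces to an $s$-dependent prefactor multiplied by a McKean-type martingale at the doubled tilt $2\Delta_\alpha$; the integrability afforded by \eqref{conv_bh} then implies that this bound vanishes in expectation as $s\to\infty$. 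Chebyshev now gives convergence in probability; the almost-sure upgrade follows by a Borel--Cantelli argument along a subsequence $t_n\uparrow\infty$, combined with an oscillation bound on $[t_n,t_{n+1}]$ coming from the same second-moment machinery (note that $Z_\alpha(t)$ is not monotone in $t$, so oscillations must be controlled by hand).

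The main technical obstacle is the uniformity of the Gaussian-tail expansion in $\{x_i(s)\}_{i\le n(s)}$: atypically high particles at time $s$ are exponentially rare but contribute exponentially much to each summand, and must be shown not to spoil either the pointwise limit of Step~2 or the variance estimate of Step~3. A related subtlety arises for $\alpha$ close to $0$, where the second-moment-to-squared-mean ratio inside each subtree no longer remains bounded uniformly in $t$ and a truncation of the positions $\{x_i(s)\}$ is needed before the variance estimate can be closed; this is precisely the regime in which the full strength of the Bovier--Hartung integrability properties of $Y_\alpha$ enters the argument.
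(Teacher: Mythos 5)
Your architecture is the same as the paper's: decompose at an intermediate time, identify the normalized conditional mean with $Y_\alpha(s)$ via Gaussian asymptotics (this matches Proposition \ref{onset} and is sound), kill the fluctuation term by a conditional second moment, and upgrade via Borel--Cantelli. (The paper avoids your iterated limit and the subsequence-plus-oscillation step by coupling $r=r(t)=(\log t)^{1+\epsilon}=o(t)$, so that the error bound of Proposition \ref{van_corr} is summable in $t$ directly.)

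The genuine gap is in your variance step. The claim that a many-to-two computation bounds $\mathrm{Var}\bigl(Z_\alpha^{(i)}(s,t)\mid\F_s\bigr)$ by a constant times $\bigl(\E[Z_\alpha^{(i)}(s,t)\mid\F_s]\bigr)^2$ is false whenever $\Delta_\alpha=\sqrt{2}-\alpha\geq 1$, i.e.\ for all $\alpha\leq\sqrt{2}-1$. For a subtree of length $\tau=t-s$ targeting level $\approx\Delta_\alpha\tau$, the unconstrained two-point integral has $\gamma$-exponent $2\tau-\gamma-\Delta_\alpha^2\tau^2/(\tau+\gamma)$, whose derivative $-1+\Delta_\alpha^2\tau^2/(\tau+\gamma)^2$ is positive near $\gamma=0$ once $\Delta_\alpha>1$; the integral is then dominated by split times $\gamma^*\approx(\Delta_\alpha-1)\tau$ and exceeds the squared mean by a factor $e^{(1-\Delta_\alpha)^2\tau}\to\infty$. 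Consistently, your ``doubled tilt'' object carries the prefactor $e^{-s(1-\Delta_\alpha^2)}$, which also blows up in this regime, so the bound does not ``vanish in expectation as $s\to\infty$'' there. The divergence comes from pairs whose common ancestor at the macroscopic time $\gamma^*$ sits at height $\approx 2\gamma^*$, i.e.\ travels at speed $2>\sqrt{2}$ --- an event the actual process never realizes but which dominates the expectation. Truncating the positions $\{x_i(s)\}$ at the fixed time $s$, as you propose, cannot cure this: the offending splits occur at times of order $t$, deep inside the subtrees. What is needed is a path localization on all of $[r,t]$: the paper imposes the barrier $x_k(u)\leq(\Delta_\alpha+\varepsilon)u$, shows in Lemma \ref{barrier} that barrier-crossing particles contribute negligibly after normalization, and only then runs the two-point estimate (Lemma \ref{twopoint}), where the constraint $y\leq(\Delta_\alpha+\varepsilon)\gamma$ at the split time restores the decay $e^{-\kappa_\alpha r}$ for every $\alpha\in(0,\sqrt{2})$. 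As written, your argument proves the theorem only for $\alpha>\sqrt{2}-1$.
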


According to the SLLN, the random shift entering the weak limit \eqref{bh} in the Bovier-Hartung extremal process thus captures the average number of {\it successful} particles. It is however noteworthy that the definition of {\it success} comes here with a twist: it pertains to those particles reaching\footnote{in a "distant past", as will become clear in the course of the proof.} heights which lie {\it macroscopically lower} than the level of the maximum; this should be contrasted with \eqref{abk_e}, where successful particles reach {\it extremal} heights. Half-jokingly, we may thus say that Lalley and Sellke's derivative martingale is way more elitist than McKean's martingale! \\

The proof of the SLLN follows the strategy of \cite{abk_ergodicity}. Precisely, for some $r =o(t)$ to be specified later, and $\mathcal F_r \defi \sigma\left( x_k(r), k \leq n(r) \right)$ the standard filtration of BBM, we first decompose telescopically 
\beq \label{tele}
\frac{Z_{\alpha}(t)}{\E Z_{\alpha}(t)} = \frac{\E\left[ Z_{\alpha}(t)  \mid  \mathcal F_r  \right]}{\E Z_{\alpha}(t)} + \frac{  Z_{\alpha}(t) -  \E\left[ Z_{\alpha}(t)  \mid  \mathcal F_r  \right] }{\E Z_{\alpha}(t)}. 
\eeq
The next Proposition will seamlessly follow from the strong Markov property of BBM and classical Gaussian estimates.

\begin{prop} \label{onset} (Onset of McKean's martingale) It holds: 
\beq \label{precise}
 \frac{\E\left[ Z_{\alpha}(t)  \mid  \mathcal F_r  \right]}{\E Z_{\alpha}(t)} = (1+o(1)) Y_\alpha(r), 
\eeq
almost surely. 
\end{prop}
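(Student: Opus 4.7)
The plan is to apply the strong Markov property of BBM at time $r$ and then expand both numerator and denominator via the classical Gaussian tail asymptotics.

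Conditional on $\mathcal F_r$, the descendants of each particle alive at time $r$ form independent copies of BBM of duration $t-r$ started from $x_k(r)$. The branching Markov property, combined with the many-to-one lemma, yields
\beq
\E\left[Z_\alpha(t) \mid \mathcal F_r\right] = \sum_{k \leq n(r)} e^{t-r}\, \PP\left(B_{t-r} \geq \Delta_\alpha t - x_k(r)\right),
\eeq
with $B$ a standard Brownian motion; the unconditional version is $\E Z_\alpha(t) = e^t \PP(B_t \geq \Delta_\alpha t)$.

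Next I would feed the standard Gaussian tail estimate $\PP(B_s \geq y) = \frac{\sqrt{s}}{y\sqrt{2\pi}} e^{-y^2/(2s)}(1+O(s/y^2))$, valid for $y/\sqrt{s} \to \infty$, into both the numerator and denominator, taking $s = t-r$ and $y_k = \Delta_\alpha t - x_k(r)$. A direct expansion of the exponent gives
\beq
\frac{\Delta_\alpha^2 t}{2} - \frac{y_k^2}{2(t-r)} = -\frac{\Delta_\alpha^2 r}{2} + \Delta_\alpha x_k(r) + O\!\left(\frac{r\,|x_k(r)| + x_k(r)^2}{t}\right),
\eeq
while the polynomial prefactor equals $\sqrt{(t-r)/t}\cdot \Delta_\alpha t / y_k = 1 + o(1)$. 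Multiplying by the leftover factor $e^{-r}$, each summand in the ratio reduces to $(1+o(1))\exp\left[-r(1+\Delta_\alpha^2/2) + \Delta_\alpha x_k(r)\right]$, which is precisely the $k$-th term of $Y_\alpha(r)$. Summing over $k\leq n(r)$ and factoring the uniform $1+o(1)$ delivers \eqref{precise}.

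\textbf{Main obstacle.} The only delicate point is ensuring the $o(1)$ is uniform across the random population $\{x_k(r): k\leq n(r)\}$ at time $r$. This is handled by the classical almost sure linear bound $\max_{k\leq n(r)}|x_k(r)| \leq \sqrt{2}\,r\,(1+o(1))$ on the range of BBM (upper bound by the law of large numbers for the maximum, lower bound by symmetry). Under this bound, the error term in the exponent is at most $O(r^2/t)$, which vanishes as soon as $r=r(t)$ is chosen with $r(t)=o(\sqrt{t})$. This pins down the admissible rate of growth for $r$ and is exactly what permits, in the proof of Theorem \ref{main}, the subsequent limits $t\to\infty$ at fixed $r$ followed by $r\to\infty$, the latter invoking the almost sure convergence \eqref{conv_bh} of McKean's martingale.
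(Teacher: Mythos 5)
Your proposal is correct and follows essentially the same route as the paper: the branching Markov property at time $r$, the Gaussian tail estimate of Lemma \ref{tail} applied to each summand, and the asymptotics of Lemma \ref{asymptotics} for the normalization. Your explicit tracking of the quadratic error term in the exponent, which leads to the slightly more restrictive (but honest) requirement $r=o(\sqrt{t})$ in place of the paper's stated $r=o(t)$, is harmless, since the proof of Theorem \ref{main} only ever uses $r=(\log t)^{\epsilon}$.
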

By the above, and with our main theorem in mind, an important ingredient is therefore to prove that the second term on the r.h.s. of the telescopic decomposition \eqref{tele} yields an irrelevant contribution in the limit of large times. This is guaranteed by the following

\begin{prop} \label{van_corr} There exists $\kappa_\alpha >0$ such that 
\beq \label{rest_term}
\PP\left(\left|\frac{Z_{\alpha}(t) -  \E\left[ Z_{\alpha}(t)  \mid  \mathcal F_r  \right] }{\E Z_{\alpha}(t)}\right| >c \right)\leq (1+o(1))\frac{c+1}{c^2}e^{-\kappa_\alpha r} \,,
\eeq
for $r=o(t)$ as $t\to \infty$.
\end{prop}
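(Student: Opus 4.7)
I apply Chebyshev's inequality to the conditionally centred quantity $W\defi Z_{\alpha}(t)-\E[Z_{\alpha}(t)\mid\F_r]$ and control the conditional variance via a many-to-two expansion featuring a clean cancellation up to time $r$. Since $\E[W\mid\F_r]=0$, one has $\E W^2=\E[\text{Var}(Z_\alpha(t)\mid\F_r)]$, and Chebyshev gives $\PP(|W|/\E Z_\alpha(t)>c)\leq \E[\text{Var}(Z_\alpha(t)\mid\F_r)]/(c^2(\E Z_\alpha(t))^2)$. The prefactor $(c+1)/c^2=1/c+1/c^2$ is obtained by combining this Chebyshev bound with a matching Markov bound of order $1/c$ on $|W|$, both of the form $O(e^{-\kappa_\alpha r})$. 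The proposition thus reduces to showing $\E[\text{Var}(Z_\alpha(t)\mid\F_r)]\leq(1+o(1))\,e^{-\kappa_\alpha r}\,(\E Z_\alpha(t))^2$.

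\textbf{Cancellation via many-to-two.} I expand each side of $\E[\text{Var}(Z_\alpha(t)\mid\F_r)]=\E Z_\alpha(t)^2-\E[\E[Z_\alpha(t)\mid\F_r]^2]$. Writing $p_u(y)\defi\PP(B_u\geq y)$ and $L\defi\Delta_\alpha t$, the many-to-two formula (classifying pairs of distinct particles by the time $s$ of their most recent common ancestor) yields
\beq
\E Z_\alpha(t)^2 = \E Z_\alpha(t) + 2\int_0^t e^{2t-s}\,\E[p_{t-s}(L-B_s)^2]\,ds,
\eeq
and the same formula applied at time $r$ with the function $q(x)=e^{t-r}p_{t-r}(L-x)$ yields
\beq
\E[\E[Z_\alpha(t)\mid\F_r]^2]=e^{2t-r}\,\E[p_{t-r}(L-B_r)^2]+2\int_0^r e^{2t-s}\,\E[p_{t-s}(L-B_s)^2]\,ds.
\eeq
The off-diagonal contributions with common-ancestor time $s\in[0,r]$ agree term-by-term and cancel---naturally, since such pairs are already encoded by $\F_r$ at the level of conditional means---leaving only the diagonal $\E Z_\alpha(t)$, the integral over $s\in[r,t]$, and a subtracted non-positive boundary term of size $e^{2t-r}\,\E[p_{t-r}(L-B_r)^2]$.

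\textbf{Gaussian saddle-point.} A completion-of-the-square computation in the Gaussian integral defining $\E[p_{t-s}(L-B_s)^2]$ yields, up to polynomial factors in $t$, the bound $\E[p_{t-s}(L-B_s)^2]/p_t(L)^2 \lesssim \exp(\Delta_\alpha^2 st/(t+s))$. Since $(\E Z_\alpha(t))^2\asymp t^{-1}e^{t(2-\Delta_\alpha^2)}$, the integrand of the surviving pair term becomes, after normalisation, essentially $e^{-s(1-\Delta_\alpha^2 t/(t+s))}$. In the $L^2$-regime $\Delta_\alpha<1$ this exponent is monotonically decreasing in $s\geq 0$, so its maximum on $[r,t]$ is attained at $s=r$ and---using $r=o(t)$ so that $t/(t+r)\to 1$---is of order $e^{-r(1-\Delta_\alpha^2)}$. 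Integrating over $[r,t]$ preserves this order, delivering the conditional-variance estimate with $\kappa_\alpha=1-\Delta_\alpha^2$.

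\textbf{Main obstacle.} The saddle-point exponent $1-\Delta_\alpha^2 t/(t+s)$ is uniformly positive on $[r,t]$ only when $\Delta_\alpha<1$, i.e.\ $\alpha>\sqrt 2-1$, matching the $L^2$-threshold of McKean's martingale. For $\alpha\in(0,\sqrt 2-1]$, the unconstrained saddle at $s=(\Delta_\alpha-1)t\in[0,t)$ produces an exponentially \emph{large} contribution, so the pure second-moment argument does not suffice; one would instead work with $L^p$-moments for $p\in(1,2/\Delta_\alpha^2)$ or truncate the time-$r$ positions to a window around $\Delta_\alpha r$ before comparing moments. Correctly identifying the cancellation in the many-to-two expansion is the other delicate step of the argument.
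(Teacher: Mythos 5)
Your skeleton --- Chebyshev on $Z_\alpha(t)-\E[Z_\alpha(t)\mid\F_r]$, the exact cancellation of all pair contributions with branching time $s\le r$ (equivalently, of the cross terms $i\neq i'$, which are conditionally independent given $\F_r$), and the reduction to the pair integral over branching times $s\in[r,t]$ --- is exactly the paper's, and your saddle-point evaluation $e^{2t-s}\,\E[p_{t-s}(L-B_s)^2]/(\E Z_\alpha(t))^2\asymp \exp\left(-s\left(1-\Delta_\alpha^2 t/(t+s)\right)\right)$ is correct. But this is precisely where the genuine gap sits, and you have diagnosed it yourself: the untruncated second moment closes only for $\Delta_\alpha<1$, i.e.\ $\alpha>\sqrt2-1$, whereas the Proposition (and Theorem \ref{main}) is claimed for all $\alpha\in(0,\sqrt2)$. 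Flagging the obstacle is not the same as overcoming it, and neither remedy you sketch does the job as stated: an $L^p$ bound with $p\in(1,2)$ is not carried out and would require a quantitative, $r$-dependent $L^p$ estimate for the conditionally centred count, which is not standard; and truncating only the \emph{time-$r$ positions} is insufficient, because the divergence is produced by pairs whose common ancestor branches at a time $\gamma$ anywhere in $[r,t]$ at the anomalous height $y^\ast=\tfrac{2\gamma \Delta_\alpha t}{t+\gamma}\approx 2\Delta_\alpha\gamma$, which a constraint imposed at time $r$ alone does not rule out for $\gamma\gg r$.

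The paper's fix is a \emph{path} barrier. It first shows (Lemma \ref{barrier}, via a Brownian-bridge crossing estimate plus Markov) that particles whose trajectory exceeds the line $(\Delta_\alpha+\varepsilon)s$ at some $s\in[r,t]$ contribute at most $\tfrac1c e^{-r\varepsilon^2/4}$ to the probability --- this, incidentally, is where the $1/c$ part of the prefactor $(c+1)/c^2$ actually comes from, not from a ``matching Markov bound on $|W|$'' --- and then runs your second-moment computation on the truncated count $Z^{\leq}_\alpha(t)$. The constraint $y\le(\Delta_\alpha+\varepsilon)\gamma$ at the branching time excludes the anomalous saddle and improves the effective exponent of the pair integrand from $(\Delta_\alpha^2-1)\gamma$ to $(\Delta_\alpha^2/2-1+O(\varepsilon))\gamma$, which is negative for every $\Delta_\alpha<\sqrt2$ once $\varepsilon=\varepsilon_\alpha$ is small; this yields any $\kappa_\alpha<1-\Delta_\alpha^2/2$ on the whole parameter range. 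In short: right architecture and a correct identification of where it breaks, but the essential ingredient --- localization of the entire path below a slightly supercritical line --- is missing, and without it your argument proves the Proposition only for $\alpha>\sqrt2-1$.
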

A small remark concerning the conceptual picture behind Proposition \ref{van_corr} is perhaps at place. As mentioned,  the proof strategy and, in particular, the telescopic decomposition \eqref{tele}, are borrowed from \cite{abk_ergodicity}. In the latter paper, the counterpart of Proposition \ref{van_corr}, namely \cite[Theorem 3]{abk_ergodicity} holds thanks to a delicate decorrelation at specific timescales of the extremal particles of BBM which, in turns, is a consequence of the picture derived in \cite{abk_genealogy}. It is however unreasonable to expect here a similar decorrelation: $\alpha$-high-particles, namely those with $x_k(t) \geq\Delta_\alpha t$, are unlikely to come from (genealogically) distant ancestors. Indeed, quite the contrary is true:  a wealth of random variables contributing to $Z_\alpha(t)$ turn out to be strongly correlated, but these are {\it washed out}, in the limit of large times, by the exponentially large normalization  $\E Z_\alpha(t)$.  \\

Assuming Proposition \ref{onset} and \ref{van_corr}, our main theorem steadily follows.

\begin{proof}[Proof of Theorem \ref{main}]
We use the above telescopic decomposition 
\beq
\frac{Z_{\alpha}(t)}{\E Z_{\alpha}(t)} = \frac{\E\left[ Z_{\alpha}(t)  \mid  \mathcal F_r  \right]}{\E Z_{\alpha}(t)} + \frac{  Z_{\alpha}(t) -  \E\left[ Z_{\alpha}(t)  \mid  \mathcal F_r  \right] }{\E Z_{\alpha}(t)} =: (A) + (B)\,.
\eeq

By Proposition \ref{van_corr}, the $(B)$-term on the r.h.s. above vanishes in probability as $t \uparrow \infty$ first, and $r\uparrow \infty$ next. In fact, we  may lift this to an almost sure statement in the single limit $t\uparrow \infty$: simply choose $r = r(t) \to \infty$ such that the r.h.s of \eqref{rest_term} becomes integrable (any choice of the form $r = (\log t)^\epsilon$ with $\epsilon> 1$ will do) and appeal to the Borel-Cantelli Lemma together with standard approximation arguments (for the latter, see e.g. \cite{abk_genealogy}). But for such choice $r = r(t)$, and the almost sure convergence of the McKean's martingale established by Bovier-Hartung, the $(A)$-term will converge, almost surely, to $Y_\alpha$. This settles the proof of the SLLN.
\end{proof}

The rest of the paper is devoted to the proofs of Proposition \ref{onset} and \ref{van_corr}. Before giving the details, we conclude this section with the following \\

\noindent {\bf Conjecture}.  {\it A SLLN as in Theorem \ref{main} holds true, \emph{mutatis mutandis}, in all models belonging to the BBM-universality class, such as the 2-dim Gaussian free field \cite{bdg, bdz, b_d_g, bl, bl_two, bl_three}, the  2-dim cover times \cite{dprz, bk, brz}, the characteristic polynomials of random unitary matrices \cite{abb, cnm, pz}, and the extreme values of the Riemann zeta function on the critical line \cite{abh, najnudel, abbrs, at}. In particular, we expect that an \emph{approximate McKean's martingale} will capture in all such models the almost sure limit of the normalized number of high-points. (What stands behind this wording becomes, of course, model-dependent).} \\

\noindent{\bf Acknowledgments.} It is a pleasure to thank David Belius for raising the question of high-points in the 2dim GFF, which lead to the writing of this paper. 

\section{Proofs}

\subsection{Some preliminaries, and Onset of McKean's martingale}
We will make constant use of some classical Gaussian tail-estimates:

\begin{lem} \label{tail} 
Let $X \sim \mathcal N(0, \s^2)$ be centered Gaussian random variables. Then 
\beq
\PP\left[ X> a\right] = \left(2\pi \right)^{-1/2} (\s/a) \exp\left[ - \frac{a^2}{2 \s^2} \right]\left( 1+ O\left(\s^2/a^2\right)\right)\, \qquad (a/\s \to \infty)\,,
\eeq
with the r.h.s. above \emph{without} error term being an upper bound valid for any $a>0$.
\end{lem}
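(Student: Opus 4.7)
The plan is to reduce to the standard normal case and then use integration by parts on the Gaussian density, which is the textbook route to Mills' ratio asymptotics. First I would set $Z \defi X/\s \sim \mathcal N(0,1)$ and $b \defi a/\s$, so that $\PP[X>a] = \PP[Z>b]$, and the claim becomes the statement that
\[
\PP[Z>b] = \frac{1}{\sqrt{2\pi}\, b}\, e^{-b^2/2}\bigl(1 + O(b^{-2})\bigr) \qquad (b\to\infty),
\]
with the leading term (without the error factor) being a valid upper bound for every $b>0$.

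For the \emph{upper bound}, I would use the elementary estimate $x/b \geq 1$ for $x \geq b > 0$ inside the integral:
\[
\PP[Z>b] = \int_b^\infty \frac{1}{\sqrt{2\pi}} e^{-x^2/2}\, dx \leq \int_b^\infty \frac{x}{b}\cdot \frac{1}{\sqrt{2\pi}} e^{-x^2/2}\, dx = \frac{1}{\sqrt{2\pi}\, b}\, e^{-b^2/2}.
\]
Translating back via $b = a/\s$ yields the claimed bound for all $a>0$.

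For the \emph{asymptotics}, I would integrate by parts by writing $e^{-x^2/2} = -\tfrac{1}{x} \cdot \tfrac{d}{dx} e^{-x^2/2}$:
\[
\int_b^\infty e^{-x^2/2}\, dx \;=\; \frac{e^{-b^2/2}}{b} \;-\; \int_b^\infty \frac{1}{x^2}\, e^{-x^2/2}\, dx.
\]
Iterating the same trick once more on the remainder produces a bound of the form $b^{-3} e^{-b^2/2}$ on the correction, which (after dividing by the leading $b^{-1} e^{-b^2/2}$) is $O(b^{-2})$. Substituting $b = a/\s$ gives the multiplicative error $1 + O(\s^2/a^2)$ stated in the lemma.

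There is really no serious obstacle here: the argument is entirely classical, and the only thing to be careful about is keeping track of signs and constants in the integration-by-parts step, and confirming that the leading-order integration-by-parts identity together with the trivial domination $x \geq b$ simultaneously yields the sharp asymptotic and the universal upper bound. Since both pieces boil down to one-line manipulations of $\int_b^\infty e^{-x^2/2}\, dx$, the whole proof should fit in a few lines.
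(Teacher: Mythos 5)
Your proof is correct and is precisely the classical Mills-ratio argument (domination by $x/b\geq 1$ for the universal upper bound, integration by parts for the $1+O(b^{-2})$ asymptotics); the paper offers no proof at all, explicitly treating the lemma as a classical Gaussian tail estimate, so your write-up supplies exactly the standard argument the authors are invoking.
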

The following is also elementary.
\begin{lem} \label{asymptotics}
\beq
\E Z_\alpha(t) \sim \left(\Delta_\alpha \sqrt{2\pi}\right)^{-1} \exp\left[ \left(1-\Delta_\alpha^2/2\right)t- \frac{1}{2} \log(t) \right].
\eeq
\end{lem}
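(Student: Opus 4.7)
The plan is to reduce the expectation to a single Gaussian tail probability via the many-to-one formula, and then apply Lemma \ref{tail} directly.

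First, I would invoke the many-to-one identity for BBM: since conditional on the branching structure each particle performs an independent standard Brownian motion, and $\E n(t)=e^t$, one has
\beq
\E Z_\alpha(t) = \E\left[\sum_{k\leq n(t)} \1\{x_k(t)\geq \Delta_\alpha t\}\right] = e^t \, \PP\!\left(B_t \geq \Delta_\alpha t\right),
\eeq
where $B_t\sim\mathcal N(0,t)$. This is the only probabilistic input needed; everything else is an elementary Gaussian asymptotic.

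Next, I would apply Lemma \ref{tail} with $\s^2=t$ and $a=\Delta_\alpha t$. Since $a/\s = \Delta_\alpha \sqrt{t}\to\infty$, the lemma gives
\beq
\PP\!\left(B_t\geq \Delta_\alpha t\right) = (2\pi)^{-1/2}\,\frac{\sqrt{t}}{\Delta_\alpha t}\,\exp\!\left[-\frac{\Delta_\alpha^2 t}{2}\right]\bigl(1+O(1/t)\bigr) = \frac{1}{\Delta_\alpha\sqrt{2\pi}}\exp\!\left[-\frac{\Delta_\alpha^2 t}{2}-\tfrac{1}{2}\log t\right]\bigl(1+o(1)\bigr).
\eeq
Multiplying by $e^t$ yields precisely the claimed asymptotic
\beq
\E Z_\alpha(t) \sim \left(\Delta_\alpha\sqrt{2\pi}\right)^{-1}\exp\!\left[\left(1-\Delta_\alpha^2/2\right)t - \tfrac{1}{2}\log t\right].
\eeq

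There is really no obstacle here: the proof is a two-line calculation once the many-to-one lemma is invoked, and the use of the hypothesis $\alpha\in(0,\sqrt{2})$ (equivalently $\Delta_\alpha>0$) enters only to guarantee that $\Delta_\alpha t\to\infty$ fast enough for the Gaussian tail asymptotic in Lemma \ref{tail} to apply. The only point worth flagging is that Lemma \ref{tail} is stated for centered Gaussians with variance $\s^2$, so one must remember to take $\s^2=t$ (not $\s=1$) in the substitution; this is what produces the $-\tfrac12\log t$ correction from the $\s/a$ prefactor.
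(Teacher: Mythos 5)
Your proof is correct and is exactly the intended argument: the paper states Lemma \ref{asymptotics} without proof (calling it ``elementary''), and the computation you give --- many-to-one to reduce to $e^t\,\PP(B_t\geq \Delta_\alpha t)$ with $B_t\sim\mathcal N(0,t)$, then Lemma \ref{tail} with $\s^2=t$, $a=\Delta_\alpha t$ --- is precisely the calculation the authors implicitly perform (it reappears in their proof of Proposition \ref{onset}). Your remark about taking $\s^2=t$ rather than $\s=1$, which is what produces the $-\tfrac12\log t$ term, is the only point where one could slip, and you handle it correctly.
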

(Here and throughout, we use $f(t) \sim g(t)$ if the ratio converges, as $t \to \infty$, to one).  In order to exploit the strong Markov property of BBM, for $t$ and $r$ as in Proposition \ref{onset} , we re-label particles at time $t$ according to their ancestor at time $r$:
\beq
\left(x_k(t)\right)_{k\leq n(t)} = \left( x_{i}(r) + x_{i,j}(t-r) \right)_{i \leq n(r), j \leq n_i(t-r)}.
\eeq
Precisely: $x_{i}(r) $ is the position of the $i$-th particle at time $r$, $n(r)$ is the number of such particles,  $n_{i}(t-r)$ stands for the number of offspring such particle has produced in the timespan $t-r$, and  finally $x_{i,j}(t-r)$ denotes the displacement of the $j$-th offspring of particle $i$ from its starting position $x_{i}(r)$.

\begin{proof}[Proof of Proposition \ref{onset} (Onset of McKean's martingale)] 
\beq \bea 
& \E\left[ Z_\alpha(t) \mid \mathcal F_r \right] = \E\left[  \sum_{k \leq n(t)} {\boldsymbol 1}\left\{ x_k(t) \geq \Delta_\alpha t \right\}  \Big| \mathcal F_r \right]  = \\
& = \E\left[  \sum_{i\leq n(r)} \sum_{j \leq n_i(t-r)} {\boldsymbol 1}\left\{ x_{i,j}(t-r) \geq \Delta_\alpha (t-r) -  (x_{i}(r) -\Delta_\alpha r )  \right\}  \Big| \mathcal F_r \right] \\
& = \sum_{i \leq n(r)} e^{t-r} \PP\left[ x_1(t-r) \geq \Delta_\alpha (t-r) -  (x_{i}(r) -\Delta_\alpha r ) \Big| \mathcal F_r \right] \\
& \sim  \left(\Delta_\alpha \sqrt{2\pi}\right)^{-1} \exp\left[ \left(1-\Delta_\alpha^2/2\right)t- \frac{1}{2} \log(t) \right] Y_\alpha(r), \quad \mbox{a.s.}.
\eea \eeq
The last step by combining Lemma \ref{tail} with the fact that $\left(x_{k}(r)-\Delta_\alpha r\right)/\left(t-r\right) \rightarrow 0$  almost surely for $r=o(t)$, see e.g. \cite{hu_shi} (where, in fact, control at even finer levels is provided). The
claim then follows by Lemma \ref{asymptotics}.
\end{proof}

\subsection{Vanishing correlations, via second moment}
We will prove Proposition \ref{van_corr} by a second moment estimate. For these computations to go through, we need however to localize paths of contributing particles. (This approach is by now classical in the BBM-field, see  for instance \cite{abk_ergodicity} for a closely related setting).  As for the localization, let $\varepsilon>0$  and consider 
\beq
Z^{>}_\alpha(t) \defi \sharp \left\{ k \leq n(t): x_k(t) \geq \Delta_\alpha t, \exists s\in[r,t]: x_k(s)> (\Delta_\alpha +\varepsilon)s \right\}, 
\eeq
This random variable thus counts paths which overshoot at some point (in time) the straight line connecting $0$ to $(\Delta_\alpha +\varepsilon)t$. As it turns out, such particles do not contribute, upon  $\E[Z_\alpha (t)]$-normalization,  to the $\alpha$-high-points. Here is the precise statement. 

\begin{lem} (Paths-localization) \label{barrier}
For $r= o(t)$, $r,t$ both sufficiently large, it holds:
\beq
\PP\Big(  Z^{>}_\alpha(t) \geq c \E Z_\alpha(t) \Big) \leq \frac{1}{c}  \exp\left( - r \frac{\varepsilon^2}{4}  \right)\,, \eeq
and
\beq
 \PP\Big( \E[ Z^{>}_\alpha(t)|\mathcal{F}_r] \geq c \E Z_\alpha(t) \Big) \leq \frac{1}{c}  \exp\left( - r \frac{\varepsilon^2}{4} \right)\,. 
\eeq
\end{lem}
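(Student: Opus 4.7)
The plan is to reduce both inequalities to a single first-moment bound of the form $\E Z_\alpha^>(t) \leq C \E Z_\alpha(t) \exp(-r\varepsilon^2/4)$, after which Markov's inequality directly settles the first claim; for the second, since $\E\,\E[Z_\alpha^>(t)\mid \mathcal F_r] = \E Z_\alpha^>(t)$ by the tower property, Markov's inequality applies again with the same bound. This is the standard way such path-localization statements are handled in the BBM literature, and both estimates in Lemma \ref{barrier} then follow from one and the same computation.

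To estimate $\E Z_\alpha^>(t)$, I would apply the many-to-one lemma to write
$\E Z_\alpha^>(t) = e^t \PP(\mathcal{A})$, where $\mathcal A = \{x(t)\geq \Delta_\alpha t,\ x(s)> (\Delta_\alpha+\varepsilon)s \text{ for some }s\in[r,t]\}$ and $x$ is a standard Brownian motion. I would then discretize $[r,t]$ on a unit grid and union-bound over grid points (with $\varepsilon$ replaced by $\varepsilon/2$ to absorb the modulus-of-continuity loss within each cell by a standard Brownian estimate), reducing $\PP(\mathcal A)$ to a sum of two-point probabilities $\PP(x(u)\geq (\Delta_\alpha+\varepsilon/2)u,\ x(t)\geq \Delta_\alpha t)$. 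The core algebraic input is the identity
\beq
\frac{((\Delta_\alpha+\varepsilon)u)^2}{2u}+\frac{(\Delta_\alpha t-(\Delta_\alpha+\varepsilon)u)^2}{2(t-u)} \; = \; \frac{\Delta_\alpha^2 t}{2} + \frac{\varepsilon^2 u}{2}\cdot\frac{t}{t-u},
\eeq
which shows that joint occurrence of the overshoot at time $u$ and the $\alpha$-high event at time $t$ costs an additional factor $\exp(-\varepsilon^2 u\,t/[2(t-u)])\leq \exp(-\varepsilon^2 u /2)$ on top of the bare cost $\exp(-\Delta_\alpha^2 t/2)$. Using Lemma \ref{tail} on each summand and invoking the asymptotics of Lemma \ref{asymptotics} on the normalisation, the ratio $\E Z_\alpha^>(t)/\E Z_\alpha(t)$ is bounded by $\exp(-\varepsilon^2 r/2)$ times polynomial factors in $t$ and $1/\varepsilon^2$ (coming from Gaussian prefactors and the summation over the grid).

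The main obstacle is pure book-keeping: the slack between the exponent $\varepsilon^2/2$ that the computation naturally produces and the exponent $\varepsilon^2/4$ stated in the lemma is what absorbs the polynomial corrections and the loss incurred by passing from $\varepsilon$ to $\varepsilon/2$ on the grid; once $r$ is taken sufficiently large this is automatic. I do not foresee any conceptual difficulty — in particular, no delicate decorrelation argument is needed, because the overshoot event is required to happen at a \emph{single} time and the strong Markov property at the first passage time (equivalently: the independence of Brownian increments across the grid) is all that drives the calculation.
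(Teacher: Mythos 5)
Your reduction of both inequalities to the single first-moment bound $\E Z_\alpha^>(t)\leq C\,\E Z_\alpha(t)\,e^{-r\varepsilon^2/4}$, followed by Markov's inequality (and the tower property for the conditional version), is exactly the paper's strategy. Where you diverge is in how the first moment is computed. The paper conditions on the endpoint $x_1(t)=y$, so that the overshoot event becomes a line-crossing event for a Brownian bridge $b(s)=x_1(s)-\tfrac{s}{t}x_1(t)$; it then bounds the constraint curve from below by a straight line $l$ with $l(0)=\varepsilon r/2$ and invokes the exact formula $\PP(\exists s: b(s)>l(s))=\exp(-2\,l(0)l(t)/t)$, which after completing the square yields $\E Z_\alpha^>(t)\sim \E Z_\alpha(t)\,e^{-r\varepsilon^2/2+o(r)}$ with matching Gaussian prefactors and no union-bound loss. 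Your route — unit grid, union bound, independence of increments, and your (correct) algebraic identity showing an excess cost $\tfrac{\varepsilon^2 u}{2}\cdot\tfrac{t}{t-u}$ for touching the steeper line at time $u$ — is more elementary and more robust (it would survive verbatim for branching random walk, where no bridge-crossing formula is available), at the price of slightly messier prefactor bookkeeping (including the grid points $u$ near $t$ where $\Delta_\alpha t-(\Delta_\alpha+\varepsilon/2)u\leq 0$ and the second tail estimate must be replaced by the trivial bound $1$; this regime is harmless but needs a separate line).

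One quantitative point does not work as you state it: replacing $\varepsilon$ by $\varepsilon/2$ at the grid points turns the exponent $\varepsilon^2/2$ into $(\varepsilon/2)^2/2=\varepsilon^2/8$, which is \emph{below} the $\varepsilon^2/4$ claimed in the lemma — this loss is not a polynomial correction absorbed by slack, it is a genuine quartering of the rate. As written your argument proves the lemma with $\varepsilon^2/8$ (or any constant below it) in place of $\varepsilon^2/4$. This is immaterial for the downstream use in Proposition \ref{van_corr}, which only needs some positive rate, and it is easily repaired: either use a grid of mesh $\delta$ and threshold $(\Delta_\alpha+(1-\delta)\varepsilon)u$ to recover any exponent below $\varepsilon^2/2$, or decompose at the first passage over the line $(\Delta_\alpha+\varepsilon)s$ so that no downward adjustment of $\varepsilon$ is needed. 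With that correction noted, your proof is sound.
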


\begin{proof}
\beq
\E Z^{>}_\alpha(t) = e^t \int\limits_{\Delta_\alpha t}^\infty \PP(x_1(t) \in dy) \PP(\exists s\in[r,t]: x_1(s) > (\Delta_\alpha + \varepsilon)  s   | x_1(t) = y) 
\eeq
\beq \label{brow_bridge}
=e^t \int\limits_{\Delta_\alpha t}^\infty \PP(x_1(t) \in dy) \PP(\exists s\in[r,t]: b(s) > (\Delta_\alpha + \varepsilon - \frac{y}{t})  s ) ,
\eeq
where $b(s) \defi x_1(s)-\frac{s}{t}x_1(t)$ is a Brownian bridge of length $t$. Consider the line $l$ from $(0,\varepsilon r/2)$ to $(t,(\Delta_\alpha + \varepsilon/2)t - y)$. One easily checks that $l(s) \leq (\Delta_\alpha + \varepsilon - \frac{y}{t})  s$ for all $s\in[r,t]$. Hence  the probability involving the Brownian brige is at most 
\beq 
\PP(\exists s\in[0,t]: b(s) > l(s) ) = \exp\left( -2  \frac{l(0)l(t)}{t}\right),
\eeq  
by a well-known formula (see e.g. \cite{scheike}). Using this, \eqref{brow_bridge} is  therefore at most
\beq \bea
& e^t \int\limits_{\Delta_\alpha t}^\infty  \frac{1}{\sqrt{2\pi t}}\exp\left( - \frac{y^2}{2t} - \varepsilon r \left( \Delta_\alpha +\frac{\varepsilon}{2} - \frac{y}{t}  \right)\right)dy \\
& \qquad =  \exp\left( t - r \left( \Delta_\alpha \varepsilon+\frac{\varepsilon^2}{2} \right) + \frac{\varepsilon^2 r^2}{2t}\right)  \int\limits_{\Delta_\alpha t}^\infty  \frac{1}{\sqrt{2\pi t}}\exp\left( - \frac{(y-\varepsilon r)^2 }{2t}  \right)dy,
\eea \eeq
which is, by Lemma \ref{tail}, 
\beq \bea
& \sim \frac{\sqrt{t}}{(\Delta_\alpha t -\varepsilon r)\sqrt{2\pi}} \exp\left( t - r \left( \Delta_\alpha \varepsilon+\frac{\varepsilon^2}{2} \right) + \frac{\varepsilon^2 r^2}{2t} - \frac{(\Delta_\alpha t -\varepsilon r)^2 }{2t}\right) \\
& \sim  \E[Z_\alpha(t)] \exp\left( - r \frac{\varepsilon^2}{2}  + o(r) \right),
\eea \eeq
the second asymptotical equivalence by Lemma \ref{asymptotics}. The claim of Lemma \ref{barrier} thus follows from Markov inequality. 
\end{proof}

As mentioned, we will prove Proposition \ref{van_corr} by means of a (truncated) second moment computation. The following is the key estimate.

\begin{lem} (Pair-counting) \label{twopoint}
Let $I_{i,j}$ be the the Indicator of the event  that the $j$-th offspring at time $t$ of particle $i$ at time $r$ contributes to $Z_\alpha^{\leq}(t)$, i.e the event
\beq
\{x_i(r) + x_{i,j}(t-r) \geq \Delta_\alpha t, \forall s\in[r,t]: x_{i}(r)+ x_{i,j}(s-r) \leq (\Delta_\alpha+ \varepsilon ) s \}
\eeq
for $i\leq n(r)$ and $ j\leq n_{i}(r)$. Then, for any $\alpha\in (0,\sqrt{2})$ there exists $\vare_\alpha$ and $\kappa_\alpha,  r(\alpha)>0$ such that  
\beq
\E\left[\sum\limits_{i=1}^{n(r)} \sum\limits_{j\neq j'=1}^{ n_i(t-r)} I_{i,j}I_{i,j'}  \right] \leq  (1+o(1))\E[Z_\alpha(t)]^2 e^{-\kappa_\alpha r}, \quad \mbox{ as } t\rightarrow\infty\,,
\eeq
for $r > r_\alpha$.
\end{lem}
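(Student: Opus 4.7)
My plan is a truncated second-moment calculation, organised by the time $\tau\in[r,t]$ of the most recent common ancestor (MRCA) of the two descendants $(j,j')$ of ancestor $i$. The many-to-two formula (Chauvin--Rouault), applied to the sub-BBM started at time $r$, converts the left-hand side into
\[
2\int_r^t e^{2t-\tau}\,P(\tau)\,d\tau,
\]
where $P(\tau)$ is the probability of the following Y-shaped event: a single Brownian path $\{B(u)\}_{u\in[0,\tau]}$, followed by two \emph{independent} Brownian extensions $\{\widetilde B^{(k)}(u)\}_{u\in[\tau,t]}$, $k=1,2$, starting from $B(\tau)$, such that $\widetilde B^{(k)}(t)\geq\Delta_\alpha t$ for $k=1,2$, and such that the combined paths stay below the barrier $(\Delta_\alpha+\varepsilon)u$ on $[r,t]$.

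To bound $P(\tau)$, I would condition on $y\defi B(\tau)$, retain only the endpoint restriction $y\leq(\Delta_\alpha+\varepsilon)\tau$ (forced by the barrier at $u=\tau$) while dropping the barrier elsewhere (an upper bound), and apply Lemma~\ref{tail} to each extension. Up to polynomial prefactors this yields
\[
P(\tau)\lesssim\int_{-\infty}^{(\Delta_\alpha+\varepsilon)\tau}\frac{dy}{\sqrt{2\pi\tau}}\cdot\frac{t-\tau}{(\Delta_\alpha t-y)^2}\exp\!\Big[-\frac{y^2}{2\tau}-\frac{(\Delta_\alpha t-y)^2}{t-\tau}\Big].
\]
The quadratic form in $y$ is minimised at $y^\ast=2\tau\Delta_\alpha t/(t+\tau)$, with minimum value $\Delta_\alpha^2 t^2/(t+\tau)$.

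Two regimes then arise, according to whether $y^\ast$ lies inside or outside the truncation domain. When $y^\ast\leq(\Delta_\alpha+\varepsilon)\tau$ (barrier inactive, $\tau$ close to $t$), a standard Laplace evaluation at $y^\ast$ combined with the asymptotics $\E[Z_\alpha(t)]^2\sim c_\alpha e^{(2-\Delta_\alpha^2)t}/t$ from Lemma~\ref{asymptotics} produces a ratio of the form $\exp\{\tau(\Delta_\alpha^2-1-\tau/t)/(1+\tau/t)\}$, which is exponentially decaying in $\tau$ throughout this regime. When $y^\ast>(\Delta_\alpha+\varepsilon)\tau$ (barrier active, $\tau$ relatively small), a truncated-Gaussian tail estimate places the dominant mass at the boundary $y=(\Delta_\alpha+\varepsilon)\tau$, where the exponent evaluates to $-\Delta_\alpha^2 t+(\Delta_\alpha^2/2+\Delta_\alpha\varepsilon-\varepsilon^2/2)\tau-\varepsilon^2\tau^2/(t-\tau)$; after normalisation by $\E[Z_\alpha(t)]^2$, the ratio becomes $\exp\{-(1-\Delta_\alpha^2/2-\Delta_\alpha\varepsilon+\varepsilon^2/2)\tau-\varepsilon^2\tau^2/(t-\tau)\}$. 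Fixing $\varepsilon=\varepsilon_\alpha$ small enough so that $\kappa_\alpha\defi 1-\Delta_\alpha^2/2-\Delta_\alpha\varepsilon>0$ (possible for any $\alpha\in(0,\sqrt2)$, since $\Delta_\alpha<\sqrt2$), one obtains $2e^{2t-\tau}P(\tau)\leq(1+o(1))\E[Z_\alpha(t)]^2 e^{-\kappa_\alpha\tau}$ uniformly in $\tau\in[r,t]$, and integration over $\tau$ yields the claim.

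The main obstacle is the barrier-active regime: for $\Delta_\alpha>1$ (equivalently $\alpha<\sqrt2-1$) the unconstrained Laplace evaluation would give a \emph{positive} exponent in $\tau$ for small $\tau$, so the barrier truncation is genuinely indispensable precisely in that sub-range. One has to verify that the boundary evaluation indeed yields the same decay rate (up to $O(\varepsilon)$) as the large-$\tau$ regime, and to control the polynomial prefactors (from the Gaussian tail of Lemma~\ref{tail} and from the truncated-Gaussian estimate) uniformly across the transition between the two regimes, so that they do not spoil the $(1+o(1))$ factor in the stated bound.
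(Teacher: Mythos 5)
Your proposal follows essentially the same route as the paper: a many-to-two formula indexed by the splitting time, dropping the barrier everywhere except at that time, Gaussian tail bounds, a split into a "splitting near $t$" regime (handled crudely) and a main regime yielding the rate $\kappa_\alpha<1-\Delta_\alpha^2/2$, and the correct observation that the barrier is genuinely needed when $\Delta_\alpha>1$. The only point to tighten is that your displayed bound for $P(\tau)$ applies the tail estimate of Lemma~\ref{tail} over the whole range $y\leq(\Delta_\alpha+\varepsilon)\tau$, where for $\tau$ close to $t$ one can have $y\geq\Delta_\alpha t$ and the bound (and the prefactor $(\Delta_\alpha t-y)^{-2}$) degenerates; the paper avoids this by first splitting off $y\geq(\Delta_\alpha-\varepsilon)t$ and bounding that piece by a crude volume estimate before invoking the tail bound.
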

\begin{proof}
Denoting by $\varphi_\gamma$ the density of a centered Gaussian of variance $\gamma$, and with $x$ a standard Brownian motion, it holds that
\beq \bea \label{sawyer}
& \frac{1}{2}\E\left[\sum\limits_{i=1}^{n(r)} \sum\limits_{j\neq j'=1}^{ n_i(t-r)} I_{i,j}I_{i,j'}  \right] = \\
& \qquad =  \int\limits_{r}^{t} e^{2t-\gamma} \int\limits_{-\infty}^{(\Delta_\alpha +\varepsilon) \gamma} \varphi_\gamma(y)\PP(\forall s\in [r,\gamma]:x(s) \leq (\Delta_\alpha +\varepsilon)  s| x(\gamma) = y )\times \\
& \hspace{1cm} \times  \PP\left(y+ x(t-\gamma) \geq \Delta_\alpha t, \forall s\in [\gamma,t]: y+x(s-\gamma)\leq (\Delta_\alpha +\varepsilon ) s \right)^2 dy d\gamma\,.
\eea \eeq
(See e.g.  \cite{sawyer} for a rigorous derivation of a similar "two-point formula"). Dropping the path-constraint appearing in the integrand (yet {\it keeping} those in the domain of integration), \eqref{sawyer} is at most
\beq \bea \label{sawyer_two}
& \int\limits_{r}^{t} e^{2t-\gamma} \int\limits_{-\infty}^{(\Delta_\alpha +\varepsilon) \gamma} \varphi_\gamma(y)\PP\left( x(t-\gamma) \geq \Delta_\alpha t -y\right)^2 dy d\gamma\ \\
& =  \int\limits_{r}^{t} e^{2t-\gamma} \int\limits_{-\infty}^{(\Delta_\alpha +\varepsilon) \gamma} \mathbbm{1}_{\left\{y\geq (\Delta_\alpha-\varepsilon) t \right\}} (\cdot) dy d \gamma + \int\limits_{r}^{t} e^{2t-\gamma} \int\limits_{-\infty}^{(\Delta_\alpha +\varepsilon) \gamma} \mathbbm{1}_{\left\{y< (\Delta_\alpha-\varepsilon) t\right\}}  (\cdot) dy d \gamma \,, 
\eea \eeq
by distinguishing whether at time of splitting particles are above (respectively below) a threshold which is slightly below the target. 

As for the first scenario, we clearly have 
\beq \bea \label{sawyer_3}
\int\limits_{r}^{t} e^{2t-\gamma} \int\limits_{-\infty}^{(\Delta_\alpha +\varepsilon) \gamma} \mathbbm{1}_{\left\{y\geq (\Delta_\alpha-\varepsilon) t \right\}} (\cdot) dy d \gamma \leq \int\limits_{(1-2\varepsilon/\Delta_\alpha)t }^{t} e^{2t-\gamma} \int\limits_{\Delta_\alpha t- \varepsilon t}^{(\Delta_\alpha+\varepsilon) \gamma } \varphi_\gamma(y) dy d\gamma\,,
\eea \eeq
with the r.h.s. of \eqref{sawyer_3} being at most 
\beq \label{rough_est1}
\frac{4\varepsilon^2 t^2}{\Delta_\alpha\sqrt{2\pi t}}\exp\left( (1+2\varepsilon/\Delta_\alpha)t -\frac{(\Delta_\alpha t- \varepsilon t)^2}{2t}\right)  \leq \exp\left( (1- \Delta_\alpha^2/2  +\varepsilon (2/\Delta_\alpha+\Delta_\alpha))t \right), 
\eeq
for  $t$ large enough. Since $1- \Delta_\alpha^2/2 +\varepsilon (2/\Delta_\alpha+\Delta_\alpha)< 2- \Delta_\alpha^2$ for some $\varepsilon = \vare_\alpha$, it follows that \eqref{rough_est1} grows at most exponentially (in $t$) with rate smaller than  $2- \Delta_\alpha^2$, and therefore yields a negligible contribution. 

It thus remains to analyse the second scenario in \eqref{sawyer_two}. By Lemma \ref{tail} and shortening $\zeta \defi \min \{(\Delta_\alpha +\varepsilon) \gamma, \Delta_\alpha t- \varepsilon t\} $, we have that
\beq \label{sawyer_four} \bea
& \int\limits_{r}^{t} e^{2t-\gamma} \int\limits_{-\infty}^{(\Delta_\alpha +\varepsilon) \gamma} \mathbbm{1}_{\left\{y< (\Delta_\alpha-\varepsilon) t\right\}}  (\cdot) dy d \gamma \\
& \leq \int\limits_{r}^{t} e^{2t-\gamma} \int\limits_{-\infty}^{\zeta} \frac{t-\gamma}{2\pi \sqrt{\gamma}(\Delta_\alpha t - y)^2} \exp\left(-\frac{y^2}{2\gamma}-\frac{(\Delta_\alpha t - y)^2}{(t-\gamma)}\right) dy d\gamma \\
& = \int\limits_{r}^{t} e^{2t-\gamma} \int\limits_{-\infty}^{\zeta} \frac{t-\gamma}{2\pi \sqrt{\gamma}(\Delta_\alpha t - y)^2} \exp\left(-\frac{\Delta_\alpha^2 t^2}{t+\gamma}- \frac{(y-\frac{2\gamma \Delta_\alpha t}{t+\gamma})^2}{2\gamma (t-\gamma)/(t+\gamma)}\right) dy d\gamma .
\eea \eeq
Since $\Delta_\alpha t - y \geq \varepsilon t$ on the entire domain of integration, and rearranging, \eqref{sawyer_four} is at most
\beq \label{sawyer_five}
\int\limits_{r}^{t} \exp\left(2t-\gamma-\frac{\Delta_\alpha^2 t^2}{t+\gamma}\right) \frac{\sqrt{t^2-\gamma^2}}{\varepsilon^2 t^2 \gamma\sqrt{2\pi}}\PP\left( x\left(\frac{\gamma(t-\gamma)}{t+\gamma}\right) < \zeta - \frac{2\gamma \Delta_\alpha t}{t+\gamma} \right) d\gamma.
\eeq

We split \eqref{sawyer_five} again into two regions: the first concerns $\gamma> (1-\delta) t$, with $\delta \defi \frac{3\varepsilon}{\Delta_\alpha +\varepsilon}$. In this case, estimating the probability by one and the remaining integrand by a rough bound on its maximum yields a contribution of at most 
\beq \bea \label{rough_est2}
& \int\limits_{(1-\delta)t}^{t} \exp\left(2t-\gamma-\frac{\Delta_\alpha^2 t^2}{t+\gamma}\right) \frac{\sqrt{t^2-\gamma^2}}{\varepsilon^2 t^2 \gamma\sqrt{2\pi}}\PP\left( x\left(\frac{\gamma(t-\gamma)}{t+\gamma}\right) < \zeta - \frac{2\gamma \Delta_\alpha t}{t+\gamma} \right)d\gamma \\
& \hspace{2cm}\leq \delta t \exp\left((1-\frac{\Delta_\alpha^2 }{2}+\delta) t  \right) \frac{\sqrt{t^2-\gamma^2}}{\varepsilon^2 t^2 \gamma\sqrt{2\pi}} \leq  \exp\left((1-\frac{\Delta_\alpha^2 }{2}+2\delta) t  \right)\,,
\eea \eeq
for $t$ large enough. Therefore, for $\delta$ or equivalently $\varepsilon$ small enough (depending on $\alpha$ only), this term is also negligeable. 

The second case in \eqref{sawyer_five} pertains to $\gamma< (1-\delta) t$:  in this region, and due to the choice of $\delta$,  we have 
\beq 
\zeta = (\Delta_\alpha +\varepsilon) \gamma \Longrightarrow \zeta - \frac{2\gamma \Delta_\alpha t}{t+\gamma} =  \gamma \left(  \varepsilon -\Delta_\alpha \frac{ t-\gamma}{t+\gamma}\right)<0,
\eeq 
(the last estimate again due to the choice of $\delta$, and for sufficiently small $\varepsilon$ depending on $\alpha$ only).  This, together with Lemma \ref{tail}, implies that
\beq \bea \label{aaah}
& \int\limits_{r}^{(1-\delta)t} \exp\left(2t-\gamma-\frac{\Delta_\alpha^2 t^2}{t+\gamma}\right) \frac{\sqrt{t^2-\gamma^2}}{\varepsilon^2 t^2 \gamma\sqrt{2\pi}}\PP\left( x\left(\frac{\gamma(t-\gamma)}{t+\gamma}\right) < \zeta - \frac{2\gamma \Delta_\alpha t}{t+\gamma} \right) d\gamma \\
& \hspace{2cm} \leq  \int\limits_{r}^{(1-\delta)t} \exp\left(2t-\gamma-\frac{\Delta_\alpha^2 t^2}{t+\gamma}-\frac{\gamma \left(\Delta_\alpha (t-\gamma) -\varepsilon(t+\gamma)\right)^2}{2(t^2-\gamma^2)}\right) \times \\
& \hspace{7cm} \times \frac{t^2-\gamma^2}{\left(\Delta_\alpha (t-\gamma) -\varepsilon(t+\gamma)\right)\varepsilon^2 t^2 \gamma\sqrt{2\pi \gamma}}
 d\gamma\,.
\eea \eeq
Using that $-\frac{\gamma\varepsilon^2(t+\gamma)^2}{2(t^2-\gamma^2)}< 0$, and by simple algebra, \eqref{aaah} is at most
\beq \bea \label{last_calc}
& \int\limits_{r}^{(1-\delta)t} \exp\left((2-\Delta^2_\alpha)t+(\frac{\Delta_\alpha^2}{2}-1+\Delta_\alpha \varepsilon)\gamma\right) \frac{2}{\Delta_\alpha(1-\frac{3\varepsilon}{\Delta_\alpha \delta})\varepsilon^2 \gamma \sqrt{\gamma}2\pi t}
 d\gamma \\
& \sim \E[Z_\alpha(t)]^2 \frac{2 \Delta_\alpha}{(1-\frac{3\varepsilon}{\Delta_\alpha \delta})\varepsilon^2} \int\limits_{r}^{(1-\delta)t} \gamma^{-3/2}\exp\left((\frac{\Delta_\alpha^2}{2}-1+\Delta_\alpha \varepsilon)\gamma\right) d\gamma \,,
\eea \eeq
by Lemma \ref{asymptotics}. But for $\varepsilon$ sufficiently small $\Delta_\alpha^2/2-1+\Delta_\alpha \varepsilon <0$, hence the integral in \eqref{last_calc} vanishes exponentially fast in $r$.  In other words, \eqref{last_calc} can be bounded by $\E[Z_\alpha(t)]^2 \exp\left(-\kappa_\alpha r\right)$ for any $\kappa_\alpha < 1-\Delta_\alpha^2/2$ and $r$ large enough. Combining this with the fact that the two error-terms \eqref{rough_est1} and \eqref{rough_est2} are of negligeable size compared to $\E[Z_\alpha(t)]^2$ finishes the proof.
\end{proof}

We can now move to the

\begin{proof}[Proof of Proposition \ref{van_corr}]
By the paths-localization from Lemma \ref{barrier}, and for $t$  large enough,
\beq \bea \label{almost_done}
\PP\left(\left|\frac{Z_\alpha(t)- \E\left[Z_\alpha(t)|\mathcal{F}_r \right]}{\E\left[Z_\alpha(t) \right]}\right|\geq c\right) & \leq
\PP\left( \left|\frac{Z^{\leq}_\alpha(t)- \E\left[Z^{\leq}_\alpha(t)|\mathcal{F}_r \right]}{\E\left[Z_\alpha(t) \right]}\right|\geq c/2 \right) + \frac{8}{c}  \exp\left( - r \frac{\varepsilon^2}{4}  \right)  \\
&  \leq \frac{4\E\left[ \left(Z^{\leq}_\alpha(t)- \E\left[Z^{\leq}_\alpha(t)|\mathcal{F}_r \right]\right)^2 \right]}{c^2\E\left[Z_\alpha(t) \right]^2 }+\frac{8}{c}  \exp\left( - r \frac{\varepsilon^2}{4}  \right) ,
\eea \eeq
the last step by Markov inequality. We thus need sufficiently good (upper) bounds for
\beq
\E\left[ \left(Z^{\leq}_\alpha(t)- \E\left[Z^{\leq}_\alpha(t)|\mathcal{F}_r \right]\right)^2 \right]=\E\left[ \E\left[(Z_\alpha^{\leq}(t))^2 |\mathcal{F}_r \right]- \E\left[Z_\alpha^{\leq}(t)|\mathcal{F}_r \right]^2\right].
\eeq
Adopting the notation of Lemma \ref{twopoint}, 
\beq
Z_\alpha^{\leq}(t) = \sum\limits_{i\leq n(r)} \sum\limits_{j\leq n_i(t-r)} I_{i,j}, 
\eeq
in which case
\beq \bea \label{above_form}
& \E\left[(Z_\alpha^{\leq}(t))^2 |\mathcal{F}_r \right]- \E\left[Z_\alpha^{\leq}(t)|\mathcal{F}_r \right]^2 = \\
& = \qquad \sum\limits_{i,i'=1}^{n(r)} \E\left[ \left. \left(\sum\limits_{j=1}^{n_i(t-r)} I_{i,j} \right)\left( \sum\limits_{j'=1}^{n_{i'}(t-r)} I_{i',j'}\right)\right|\mathcal{F}_r\right] \\
& \hspace{5cm} -\E\left[ \left.\sum\limits_{j=1}^{ n_i(t-r)} I_{i,j} \right|\mathcal{F}_r\right] \E\left[ \left.\sum\limits_{j'=1}^{n_{i'}(t-r)} I_{i',j'}\right|\mathcal{F}_r\right].
\eea \eeq
In the above, and for $i\neq i'$, particles $(i,j)$ and $(i',j')$  have branched off before time $r$: they 
are thus independent, conditionally upon $\mathcal{F}_r$. This leads to a perfect cancellation of all terms $i\neq i'$, and reduces the above formula to  
\beq
\E\left[(Z_\alpha^{\leq}(t))^2 |\mathcal{F}_r \right]- \E\left[Z_\alpha^{\leq}(t)|\mathcal{F}_r \right]^2= 
\sum\limits_{i=1}^{n(r)} \E\left[ \left. \left(\sum\limits_{j=1}^{ n_i(t-r)} I_{i,j} \right)^2 \right|\mathcal{F}_r\right]-\E\left[ \left.\sum\limits_{j=1}^{ n_i(t-r)} I_{i,j} \right|\mathcal{F}_r\right]^2 \,.
\eeq
Dropping the second term, and taking expectations, we thus obtain 
\beq \label{really_last}
\E\left[ \left(Z^{\leq}_\alpha(t)- \E\left[Z^{\leq}_\alpha(t)|\mathcal{F}_r \right]\right)^2 \right] \leq \E\left[\sum\limits_{i=1}^{n(r)} \sum\limits_{j,j'=1}^{ n_i(t-r)} I_{i,j}I_{i,j'}  \right].
\eeq
Collecting the terms $j=j'$ yields $Z_\alpha^{\leq}(t)$, while all other terms sum over all unordered pairs of particles that have split after time $r$. Choosing $\varepsilon= \varepsilon_\alpha$ small enough and $r=o(t)$ large enough, by Lemma \ref{twopoint} there exists $\kappa_\alpha >0$ such that 
\beq \label{final_est}
\eqref{really_last} \leq \E[Z_\alpha^{\leq}(t)] + (1+o(1))\E[Z_\alpha(t)]^2 e^{-\kappa_\alpha r} = (1+o(1))\E[Z_\alpha(t)]^2 e^{-\kappa_\alpha r}.
\eeq
The claim thus follows by plugging \eqref{final_est}  into \eqref{almost_done}. 
\end{proof}

\end{document}